\newtheorem{Theorem}{Theorem}[section]
\newtheorem{Lemma}[Theorem]{Lemma}
\newtheorem{Definition}[Theorem]{Definition}
\newtheorem{Conjecture}[Theorem]{Conjecture}
\title [A conjecture on cluster automorphisms of cluster algebras]{A conjecture on cluster automorphisms of \\cluster algebras}
\author{Peigen Cao}
\author{Fang Li}
\author{Siyang Liu}
\author{Jie Pan}
\address{Peigen Cao
\newline Department of Mathematics, Zhejiang University (Yuquan Campus), Hangzhou, Zhejiang
310027,  P.R.China}
\email{peigencao@126.com}
\address{Fang Li
\newline Department of Mathematics, Zhejiang University (Yuquan Campus), Hangzhou, Zhejiang
310027,  P.R.China}
\email{fangli@zju.edu.cn}
\address{Siyang Liu
\newline Department
of Mathematics, Zhejiang University (Yuquan Campus), Hangzhou, Zhejiang
310027, P.R.China}
\email{siyangliu@zju.edu.cn}
\address{Jie Pan
\newline Department
of Mathematics, Zhejiang University (Yuquan Campus), Hangzhou, Zhejiang
310027, P.R.China}
\email{panjiezhejiang@qq.com}
\date{version of \today}
\newcommand{\lra}{\longrightarrow}
\newcommand{\ra}{\rightarrow}
\newcommand{\sdp}{\times\kern-.2em\vrule height1.1ex depth-.05ex}
\newcommand{\epi}{\lra \kern-.8em\ra}
\begin{document}

\begin{CJK*}{GBK}{song}
\renewcommand{\thefootnote}{\alph{footnote}}
\setcounter{footnote}{-1} \footnote{\emph{ Mathematics Subject
Classification(2010)}: 13F60}
\renewcommand{\thefootnote}{\alph{footnote}}
\setcounter{footnote}{-1} \footnote{ \emph{Keywords}: cluster algebra,
cluster automorphism}

\maketitle
\bigskip
\begin{abstract}
A cluster automorphism is a $\mathbb{Z}$-algebra automorphism of a cluster algebra $\mathcal A$ satisfying that  it sends a cluster to another and  commutes  with mutations. Chang and Schiffler conjectured that a cluster automorphism of $\mathcal A$  is just a $\mathbb{Z}$-algebra homomorphism of a cluster algebra sending a cluster to another.  The aim of this article is to prove this conjecture.
\end{abstract}

\section{Introduction}

Cluster algebras were invented by Fomin and Zelevinsky in a series of papers \cite{FZ1, BFZ, FZ2, FZ4}. A cluster algebra  is a $\mathbb Z$-subalgebra of an ambient field $\mathcal F=\mathbb Q(u_1,\cdots,u_n)$ generated by certain combinatorially defined generators (i.e.,  {\em cluster variables}), which are grouped into overlapping {\em clusters}. Many relations between cluster
algebras and other branches of mathematics have been discovered, for example, Poisson geometry, discrete dynamical systems, higher Teichm\"uller spaces, representation theory of quivers and finite-dimensional algebras.

We first recall the definition of cluster automorphisms, which were introduced by Assem,
Schiffler and Shamchenko in \cite{ASS}.
\begin{Definition}[\cite{ASS}]\label{cluauto} Let $\mathcal{A}=\mathcal{A}(\mathbf{x},B)$ be a cluster algebra, and $f:\mathcal{A}\rightarrow \mathcal{A}$
be an automorphism of $\mathbb{Z}$-algebras. $f$ is called a {\bf cluster automorphism} of $\mathcal A$ if there exists another seed $(\mathbf{z},B')$ of $\mathcal{A}$ such that

\begin{enumerate}
\item[(1)]$f({\bf x})={\bf z}$;
\item[(2)] $f(\mu_x({\bf x}))=\mu_{f(x)}({\bf z})$ for any $x\in{\bf x}$.
\end{enumerate}

\end{Definition}

Cluster automorphisms and their related groups were studied by many authors, one can refer to \cite{CZ0,CZ1,CZ2,LL,HLY,CL3,CS} for details.

The following very insightful conjecture on cluster automorphisms is by Chang and Schiffler, which suggests that we can  weaken the conditions in Definition \ref{cluauto}. In particular, it suggests that the second condition in Definition \ref{cluauto} can be obtained from the first one and the assumption that $f$ is a $\mathbb{Z}$-algebra homomorphism.

\begin{Conjecture}\cite[Conjecture 1]{CS} \label{conj} Let $\mathcal A$ be a cluster algebra, and $f:\mathcal A\rightarrow \mathcal A$ be a $\mathbb{Z}$-algebra homomorphism. Then $f$ is
cluster automorphism if and only if there exist two clusters $\mathbf{x}$ and $\mathbf{z}$ such that $f({\bf x})={\bf z}$.
\end{Conjecture}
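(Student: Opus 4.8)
\noindent\emph{Proof strategy.}
The ``only if'' direction is immediate from Definition~\ref{cluauto}, so the plan concerns the converse: given a $\Z$-algebra homomorphism $f\colon\mathcal{A}\to\mathcal{A}$ and clusters $\mathbf{x}=(x_1,\dots,x_n)$ and $\mathbf{z}$ with $f(\mathbf{x})=\mathbf{z}$ (after relabelling, $f(x_i)=z_i$), one must show $f$ is a cluster automorphism. Write $\mathcal{A}=\mathcal{A}(\mathbf{x},B)$. First I would extend $f$ to the ambient field: since every cluster of $\mathcal{A}$ is a transcendence basis of $\mathcal{F}$, the restriction $f|_{\Z[\mathbf{x}]}$ is an isomorphism onto $\Z[\mathbf{z}]$, so $f$ extends uniquely to a field automorphism $\bar f$ of $\mathcal{F}$ with $\bar f(x_i)=z_i$; comparing rational-function expressions gives $f=\bar f|_{\mathcal{A}}$, so $f$ is injective. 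Because $\bar f$ carries the cluster pattern of $\mathcal{A}(\mathbf{x},B)$ onto that of $\mathcal{A}(\mathbf{z},B)$ (same exchange matrix, initial cluster renamed), one gets $f(\mathcal{A})=\mathcal{A}(\mathbf{z},B)$; thus the hypothesis $f(\mathcal{A})\subseteq\mathcal{A}$ becomes $\mathcal{A}(\mathbf{z},B)\subseteq\mathcal{A}(\mathbf{z},B_{\mathbf{z}})$, where $(\mathbf{z},B_{\mathbf{z}})$ is a seed of $\mathcal{A}$ with cluster $\mathbf{z}$.

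The crux I would aim for is that this inclusion forces $B_{\mathbf{z}}=\pm B$. Granting this, since $\mathcal{A}(\mathbf{z},C)=\mathcal{A}(\mathbf{z},-C)$ for any exchange matrix $C$ (negating $C$ only swaps the two monomials in each exchange relation, hence changes neither the cluster variables nor the clusters), one obtains $f(\mathcal{A})=\mathcal{A}(\mathbf{z},B)=\mathcal{A}(\mathbf{z},B_{\mathbf{z}})=\mathcal{A}$, so $f$ is bijective; and for each $k$ the clusters $f(\mu_{x_k}(\mathbf{x}))$ and $\mu_{z_k}(\mathbf{z})$ agree in the entries indexed by $i\neq k$ (both $z_i$) and in the $k$-th entry (the exchange relations for $B$ and for $B_{\mathbf{z}}=\pm B$ yield the same element), so $f$ satisfies Definition~\ref{cluauto}(2) with the seed $(\mathbf{z},B_{\mathbf{z}})$ and is a cluster automorphism. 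It thus remains to prove the matrix identity.

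For that, I would fix $k$, let $x_k^{*}$ denote the mutation of $\mathbf{x}$ in direction $k$, so $x_kx_k^{*}$ is the binomial with unit coefficients read off the $k$-th column of $B$, and set $y:=f(x_k^{*})=z_k^{-1}(M_++M_-)\in\mathcal{A}$ with $M_\pm$ the corresponding monomials in $z_1,\dots,z_n$. Applying the Laurent phenomenon to $y$ in the cluster $\mu_k(\mathbf{z})$ of $\mathcal{A}$ obtained from $(\mathbf{z},B_{\mathbf{z}})$ shows the exchange binomial $\widehat p_k$ of $\mathbf{z}$ in direction $k$ divides $M_++M_-$ in $\Z[z_i^{\pm1}\colon i\neq k]$; writing $M_++M_-=\widehat p_k\,q$, the expansion of $y$ in $\mu_k(\mathbf{z})$ is $z_k^{*}q$. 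An elementary divisibility analysis of binomials, using that the $k$-th columns of $B$ and $B_{\mathbf{z}}$ are columns of skew-symmetrizable matrices, shows $q$ can fail to be a monomial only when the $k$-th column of $B$ equals $\pm(2m+1)$ times that of $B_{\mathbf{z}}$ for some $m\ge1$, and then $q$, hence $y$, has a negative coefficient in the $\mu_k(\mathbf{z})$-expansion. Ruling that out and varying $k$ gives that the $k$-th columns of $B$ and $B_{\mathbf{z}}$ agree up to sign for all $k$; skew-symmetrizability together with connectedness of $B$ (handling the connected components of $\mathcal{A}$ separately in general) then forces a single common sign, i.e.\ $B_{\mathbf{z}}=\pm B$.

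The main obstacle will be ruling out the negative coefficient above: a priori $y=f(x_k^{*})$ is known only to lie in $\mathcal{A}$, not to be a cluster variable, so the positivity theorem does not apply to it directly. Overcoming this is the central point, and is where I expect the structural theory of cluster patterns to be needed---for instance by first proving that $\{z_i\colon i\neq k\}\cup\{y\}$ is genuinely a cluster of $\mathcal{A}$, or by establishing the required positivity (equivalently, that $\widehat p_k\mid M_++M_-$ with monomial quotient) via a direct study of the binomial exchange polynomials, or through injectivity of denominator vectors on the cluster variables adjacent to $\mathbf{z}$. Once this key lemma is secured, the remaining ingredients---extending to $\mathcal{F}$, the reduction to the matrix identity, and checking Definition~\ref{cluauto}(2)---are routine.
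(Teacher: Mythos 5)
Your reduction matches the paper's up to the point you yourself flag as ``the main obstacle'': both arguments apply $f$ to the adjacent cluster variable $x_k^{*}$, expand $f(x_k^{*})$ in the cluster $\mu_k(\mathbf{z})$, invoke the Laurent phenomenon to see that the exchange binomial of $(\mathbf{z},B_{\mathbf{z}})$ in direction $k$ divides the image of the exchange binomial of $(\mathbf{x},B)$, and conclude that the $k$-th column of $B$ is an integer multiple $a_k$ of the $k$-th column of $B_{\mathbf{z}}$. But the proposal then stops short of proving $a_k=\pm1$, and the route you sketch for it does not work as stated: you want to rule out $|a_k|\ge 3$ by exhibiting a negative coefficient in the $\mu_k(\mathbf{z})$-expansion of $y=f(x_k^{*})$, yet, as you note, $y$ is only known to lie in $\mathcal{A}$, and arbitrary elements of $\mathcal{A}$ (e.g.\ $z_1-z_2$) do have negative coefficients in cluster expansions, so no contradiction is available from positivity without first proving something like ``$y$ is a cluster variable'' --- which is essentially equivalent to what is being proved. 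None of your three suggested repairs is carried out, so the central step of the ``if'' direction is missing.

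The paper closes exactly this gap by a purely matrix-theoretic argument that avoids positivity altogether. Writing the relation for all $k$ at once as $B=B_{\mathbf{z}}A$ with $A=\mathrm{diag}(a_1,\dots,a_n)$, it first uses a common skew-symmetrizer $D$ of $B$ and $B_{\mathbf{z}}$: both $B_{\mathbf{z}}D$ and $(B_{\mathbf{z}}D)A$ are skew-symmetric, which forces $a_i=a_j$ whenever the $(i,j)$ entry is nonzero, hence $A=aI$ on each nonzero indecomposable block. Then, since $B_{\mathbf{z}}$ is obtained from $B$ by mutations, Lemma~\ref{le} gives $B_{\mathbf{z}}=EBF$ with $E,F$ integer matrices, so $B_{\mathbf{z}}=a^{s}E^{s}B_{\mathbf{z}}F^{s}$ and $a^{-s}B_{\mathbf{z}}$ is an integer matrix for every $s$, forcing $a=\pm1$. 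This is the content of Lemmas~\ref{le1}--\ref{lemma2}, and it is the ingredient your proposal needs; with it, the rest of your outline (extension to $\mathcal{F}$, $\mathcal{A}(\mathbf{z},C)=\mathcal{A}(\mathbf{z},-C)$, verification of Definition~\ref{cluauto}(2)) goes through as in the paper's Lemmas~\ref{le2} and~\ref{le3}.
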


The following is our main result, which affirms the Conjecture \ref{conj}.

{\bf Theorem \ref{main2}}\;
{\em    Let $\mathcal A$ be a cluster algebra, and $f:\mathcal A\rightarrow \mathcal A$ be a $\mathbb{Z}$-algebra homomorphism. Then $f$ is a
cluster automorphism if and only if there exist two clusters $\mathbf{x}$ and $\mathbf{z}$ such that $f({\bf x})={\bf z}$.
  }

\section{Preliminaries}

In this section, we recall basic concepts and important properties of cluster algebras. In this paper, we focus
on cluster algebras without coefficients  (that is, with trivial coefficients).
For a positive integer $n$, we will always denote by $[1, n]$ the set $\{1,2,\dots, n\}$.

Recall that $B$ is said to be {\bf skew-symmetrizable} if there exists an positive  diagonal integer
matrix $D$ such that $BD$ is skew-symmetric.

Fix an ambient field $\mathcal{F} = \mathbb{Q}(u_1, u_2, \dots, u_n)$. A {\bf labeled seed} is a pair $(\mathbf{x}, B)$, where $\mathbf{x}$ is an $n$-tuple of free generators of $\mathcal{F}$, and $B$ is an $n\times n$ skew-symmetrizable integer matrix.
For $k\in [1,n]$, we can define another pair
$(\mathbf{x'},B') = \mu_k(\mathbf{x},B)$, where
\begin{enumerate}
\item[(1)] $\mathbf{x'} = (x'_1, \dots, x'_n)$ is given by
 \[x'_k = \frac{\prod\limits_{i=1}^n x_i^{[b_{ik}]_+} + \prod\limits_{i=1}^n x_i^{[-b_{ik}]_+}}{x_k}\]  and $x'_i = x_i$ for $i \neq k$;
\item[(2)] $B'=\mu_k(B)=(b'_{ij})_{n\times n}$ is given by \[b'_{ij}=\begin{cases} -b_{ij},&\text{if $i=k$ or $j=k$;}\\b_{ij} + \mathrm{sgn}(b_{ik})[b_{ik}b_{kj}]_+,&\text{otherwise.}\end{cases}\]
\end{enumerate}
where $[x]_+=\mathrm{max}\{x,0\}$. The new pair $(\mathbf{x'},B') = \mu_k(\mathbf{x},B)$ is called  the {\bf mutation} of $(\mathbf{x},B)$
 at $k$. We also denote $B^\prime=\mu_k(B)$.

It can be seen that $(\mathbf{x'},B')$ is also a labeled
seed and $\mu_k$ is an involution.

Let $(\mathbf{x},B)$ be a labeled seed. $\mathbf{x}$
is called a {\bf labeled cluster}, elements in $\mathbf{x}$ are called  {\bf cluster variables}, and $B$ is
called an  {\bf exchange matrix}. The {\bf unlabeled seeds} are obtained by identifying labeled seeds that differ from each other by simultaneous permutations
of the components in $\mathbf{x}$, and of the rows and columns of $B$. We will refer to unlabeled seeds and unlabeled clusters simply as {\bf seeds} and {\bf clusters} respectively, when there is no risk of confusion.

\begin{Lemma}[\cite{BFZ}]\label{le}
Let $B$ be an $n\times n$ skew-symmetrizable matrix. Then $\mu_k(B) = (J_k+E_k)B(J_k+F_k)$, where
\begin{enumerate}
\item $J_k$ denotes the diagonal $n\times n$ matrix whose
diagonal entries are all $1'$s, except for $-1$ in the $k$-th position;
\item $E_k$ is the $n\times n$ matrix whose only nonzero entries are $e_{ik} = [-b_{ik}]_+$;
\item $F_k$ is the $n\times n$ matrix whose only nonzero entries are $f_{kj} = [b_{kj}]_+$.
\end{enumerate}
\end{Lemma}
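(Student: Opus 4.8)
The plan is to verify the claimed factorization by a direct entrywise computation, interpreting left multiplication by $J_k + E_k$ as a row operation on $B$ and right multiplication by $J_k + F_k$ as a column operation. First I would record the shapes of the two factors. Since $B$ is skew-symmetrizable we have $b_{kk} = 0$, so $[-b_{kk}]_+ = [b_{kk}]_+ = 0$; hence $E_k$ and $F_k$ both vanish on the diagonal. The matrix $J_k + E_k$ then agrees with the identity in every column except the $k$-th, whose $(i,k)$ entry is $[-b_{ik}]_+$ for $i \neq k$ and $-1$ for $i = k$. Dually, $J_k + F_k$ agrees with the identity in every row except the $k$-th, whose $(k,j)$ entry is $[b_{kj}]_+$ for $j \neq k$ and $-1$ for $j = k$.

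Next I would compute $C := (J_k + E_k)B$ one row at a time. Reading off the rows of $J_k + E_k$ gives $C_{kj} = -b_{kj}$ for the $k$-th row, and $C_{ij} = b_{ij} + [-b_{ik}]_+\, b_{kj}$ for $i \neq k$. I would then multiply on the right, reading off the columns of $J_k + F_k$, to obtain $(C(J_k+F_k))_{ik} = -C_{ik}$ and $(C(J_k+F_k))_{ij} = C_{ij} + C_{ik}[b_{kj}]_+$ for $j \neq k$. Substituting the expression for $C$ and using $b_{kk}=0$ collapses the entries with $i = k$ or $j = k$ immediately to $-b_{kj}$ and $-b_{ik}$, matching the first branch of the mutation rule. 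For $i \neq k$ and $j \neq k$, since $C_{ik}=b_{ik}+[-b_{ik}]_+ b_{kk}=b_{ik}$, the $(i,j)$ entry becomes $b_{ij} + [-b_{ik}]_+\, b_{kj} + b_{ik}[b_{kj}]_+$.

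It therefore remains to identify the correction term with the one in the mutation rule, i.e. to prove the scalar identity
\[
[-a]_+\, c + a[c]_+ = \mathrm{sgn}(a)\,[ac]_+
\qquad (a = b_{ik},\ c = b_{kj}).
\]
This is the only genuine content and is the step I expect to be the main obstacle, though it is elementary: I would dispose of it by a short case analysis on the signs of $a$ and $c$. When $a \geq 0$ the left side reduces to $a[c]_+$ and the right side to $\mathrm{sgn}(a)[ac]_+$, which agree whether $c$ is positive or negative (and both vanish when $a=0$); when $a < 0$ the left side becomes $-ac + a[c]_+$, and splitting on the sign of $c$ again matches $-[ac]_+$. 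Since all four branches of the mutation rule have now been reproduced, this completes the verification of $\mu_k(B) = (J_k + E_k)B(J_k + F_k)$.
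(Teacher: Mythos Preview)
Your entrywise verification is correct: the row/column reading of $J_k+E_k$ and $J_k+F_k$, the intermediate $C=(J_k+E_k)B$, and the final case analysis establishing $[-a]_+\,c+a[c]_+=\mathrm{sgn}(a)[ac]_+$ all check out. The paper itself does not prove this lemma at all --- it is simply quoted from \cite{BFZ} --- so there is no argument in the paper to compare yours against; your direct computation is exactly the standard way to verify this factorization and would serve as a self-contained proof where the paper relies on the citation.
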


\begin{Definition}[\cite{FZ1,FZ4}]
\begin{enumerate}
\item[(1)] Two labeled seeds  $(\mathbf{x},B)$ and $(\mathbf{x}',B')$ are said to be  {\bf mutation equivalent} if $(\mathbf{x}',B')$ can be obtained from $(\mathbf{x},B)$ by a sequence of mutations;

\item[(2)] Let $\mathrm{T}_n$ be an $n$-regular tree and valencies emitting from each vertex are labelled by $1,2,\dots,n$.
A {\bf cluster pattern} is an $n$-regular tree $\mathrm{T}_n$ such that for each vertex $t\in \mathrm{T}_n$, there is a
labeled seed $\Sigma_t=(\mathbf{x}_t,B_t)$ and for each edge labelled by $k$, two labeled seeds in the endpoints
are obtained from each other by seed mutation at $k$. We always write
\[\mathbf{x}_t=(x_{1,t},x_{2,t},\dots,x_{n,t}),\,\,\,\,\,\,B_t=(b_{ij}^t).\]
\end{enumerate}
\end{Definition}

The cluster algebra $\mathcal{A}=\mathcal{A}(\mathbf{x}_{t_0},B_{t_0})$
associated with the initial seed $(\mathbf{x}_{t_0},B_{t_0})$ is a $\mathbb{Z}$-subalgebra of $\mathcal{F}$
generated by cluster variables appeared in $T_n(\mathbf{x}_{t_0},B_{t_0})$, where $T_n(\mathbf{x}_{t_0},B_{t_0})$ is the cluster pattern with $(\mathbf{x}_{t_0},B_{t_0})$ lying in
the vertex $t_0 \in T_n$.

\begin{Theorem}[Laurent phenomenon and positivity \cite{FZ4,LS,GHKK}]
Let $\mathcal{A}=\mathcal{A}(\mathbf{x}_{t_0}, B_{t_0})$ be the cluster algebra. Then  each cluster variable $x_{i,t}$ is contained in $\mathbb{Z}_{\geqslant 0}[x_{1;t_0}^{\pm 1},x_{2;t_0}^{\pm1},\dots,x_{n;t_0}^{\pm1}]$.
\end{Theorem}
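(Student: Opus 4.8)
The statement bundles two facts of very different depth — the Laurent phenomenon (integrality of the denominators) and positivity (nonnegativity of the coefficients) — so I would treat them in two stages, the second subsuming the first.

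For the Laurent phenomenon, the plan is to show $x_{i,t}\in\mathbb{Z}[x_{1;t_0}^{\pm1},\dots,x_{n;t_0}^{\pm1}]$ by induction on the distance $d(t,t_0)$ in the $n$-regular tree $\mathbb{T}_n$. For $d\leqslant 1$ the exchange relation in the definition of $\mu_k$ exhibits $x_{k,t}$ as an explicit Laurent polynomial in $\mathbf{x}_{t_0}$, so the base cases are immediate. For the inductive step I would use the Caterpillar Lemma of Fomin--Zelevinsky: the global claim is reduced to a local analysis of a length-two segment of the chosen path, where two consecutive mutations at indices $j$ and $k$ are compared with the opposite order $k$ then $j$. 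The essential input is a coprimality statement — that the numerator of the exchange binomial at a vertex is coprime, inside the UFD $\mathbb{Z}[x^{\pm1}_{t}]$, to the relevant cluster variable — which forbids any spurious denominator from surviving when an element is simultaneously Laurent in two adjacent clusters that share $n-1$ of their variables. I expect this coprimality to be the main obstacle of the first stage: it must be propagated along the induction, and its verification rests on the precise form of the exchange binomials together with the matrix mutation rule recorded in Lemma \ref{le}.

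Since the coefficients must be shown nonnegative for an arbitrary skew-symmetrizable $B$, I would not argue coefficient-by-coefficient but instead build the scattering-diagram and theta-function machinery of Gross--Hardesty--Keel--Kontsevich, which yields Laurentness and positivity simultaneously and so re-proves the first stage as well. Concretely: (i) attach to $(\mathbf{x}_{t_0},B_{t_0})$ a scattering diagram $\mathfrak{D}$, a collection of codimension-one walls in a real lattice carrying wall-crossing automorphisms whose expansions have manifestly nonnegative coefficients; (ii) prove that $\mathfrak{D}$ can be completed to a consistent diagram, so that path-ordered products of wall-crossings depend only on endpoints; (iii) define theta functions as generating series of broken lines — piecewise-linear paths that bend at walls and accumulate a monomial weight, each broken line contributing a single positive monomial; (iv) identify every cluster variable $x_{i,t}$ with a theta function and read off its expansion in $\mathbf{x}_{t_0}$ as the broken-line sum, which is then a Laurent polynomial with coefficients in $\mathbb{Z}_{\geqslant 0}$.

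The decisive and hardest step is (ii), the consistency of the scattering diagram: one must show, via the order-by-order construction of Kontsevich--Soibelman and Gross--Siebert, that the required walls can be adjoined without destroying the positivity of the wall-crossing functions. Everything else is then bookkeeping, since positivity of each individual wall-crossing automorphism is built in by construction, broken lines transport it to the theta functions, and the cluster-variable/theta-function dictionary transports it to the $x_{i,t}$. In the skew-symmetric case one may instead invoke the explicit combinatorial expansion of Lee--Schiffler in terms of compatible pairs, which renders positivity manifest but does not extend to the skew-symmetrizable setting, hence the need for scattering diagrams in full generality.
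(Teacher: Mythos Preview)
The paper does not prove this theorem at all: it is stated with a citation to \cite{FZ4,LS,GHKK} and used as a black box, so there is no ``paper's own proof'' to compare against. Your outline is a reasonable high-level sketch of the arguments in precisely those references (the Caterpillar Lemma of Fomin--Zelevinsky for Laurentness, Lee--Schiffler's compatible-pair expansion in the skew-symmetric case, and the Gross--Hacking--Keel--Kontsevich scattering-diagram/theta-function machinery for the general skew-symmetrizable case), so in that sense you are aligned with what the paper is invoking. One small correction: the authors of \cite{GHKK} are Gross--Hacking--Keel--Kontsevich, not Gross--Hardesty--Keel--Kontsevich.
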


\section{The proof of main result}

In this section, we will give our  main result, which affirms the Conjecture \ref{conj}.

\begin{Lemma}\label{le1}
Let $0\neq B$ be a skew-symmetrizable integer matrix. If $B^\prime$ is obtained from $B$ by a sequence of mutations and $B=aB^\prime$ for some $a\in\mathbb Z$, then $a=\pm 1$ and $B=\pm B^\prime$.
\end{Lemma}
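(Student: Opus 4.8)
The key tool is Lemma~\ref{le}, which expresses any single mutation of $B$ as a two–sided matrix multiplication $\mu_k(B)=(J_k+E_k)\,B\,(J_k+F_k)$. Iterating along the given sequence of mutations $k_1,\dots,k_m$ taking $B$ to $B'$, I would write $B'=PBQ$ where $P=\prod (J_{k_i}+E_{k_i})$ and $Q=\prod(J_{k_i}+F_{k_i})$ (in the appropriate order, with the matrices $E,F$ evaluated at the intermediate exchange matrices). The crucial observation is that each factor $J_k+E_k$ and $J_k+F_k$ is a \emph{unipotent-up-to-sign} integer matrix: it is triangularizable with all diagonal entries $\pm 1$, so each has determinant $\pm 1$, hence $\det P=\pm 1$ and $\det Q=\pm 1$. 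Therefore $\det B'=\pm\det B$.

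Now combine this with the hypothesis $B=aB'$. If $B$ is invertible (i.e.\ $\det B\neq 0$), then $\det B=a^n\det B'=\pm a^n\det B$, forcing $a^n=\pm 1$ and hence $a=\pm 1$ over $\mathbb Z$; then $B=\pm B'$ is immediate. The genuine obstacle is the singular case $\det B=0$, which is the typical situation for exchange matrices. Here I would instead argue via the nonzero minors: let $r$ be the rank of $B$ (equal to the rank of $B'$, since $B'=PBQ$ with $P,Q$ invertible), and pick an $r\times r$ submatrix of $B$ with nonzero determinant $d$. From $B=aB'$ this submatrix equals $a$ times the corresponding submatrix of $B'$, so $a^r\mid d$. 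I still need a bound in the other direction: comparing a \emph{maximal} nonvanishing minor of $B$ with one of $B'$ using the factorization $B'=PBQ$ and the Cauchy--Binet formula shows that the gcd (or the set) of $r\times r$ minors of $B$ and of $B'$ agree up to sign — because the $r\times r$ minors of $PBQ$ are $\mathbb Z$-linear combinations of those of $B$, and symmetrically those of $B$ are $\mathbb Z$-linear combinations of those of $B'$. Hence $\gcd$ of the $r\times r$ minors of $B$ divides that of $B'$ and vice versa, so they are equal; but from $B=aB'$ this gcd for $B$ is $|a|^r$ times that for $B'$, giving $|a|^r=1$, so $a=\pm1$.

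An alternative, perhaps cleaner, route to handle all cases uniformly: consider the skew-symmetric matrix $\widetilde B=BD$ (where $D$ is the skew-symmetrizing diagonal matrix, which is the \emph{same} for $B$ and $B'$ since mutation preserves the skew-symmetrizer). A nonzero skew-symmetric matrix has a nonzero Pfaffian minor of some even size $2s$; the Pfaffians of $2s\times 2s$ principal-type submatrices transform under the congruence-like action in Lemma~\ref{le} again by factors of determinant $\pm1$, so the gcd of $2s\times 2s$ subdeterminants is preserved, and the conclusion $|a|=1$ follows as above. Whichever variant I adopt, the main point to get right is the bookkeeping in the singular case: establishing that the relevant invariant (gcd of top-rank minors, or a Pfaffian-type invariant) is genuinely unchanged by mutation, using only that the $J_k+E_k$, $J_k+F_k$ are integer matrices with unit determinant. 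Once $a=\pm1$ is known, $B=\pm B'$ is trivial.
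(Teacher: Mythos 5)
Your argument is correct, but it takes a genuinely different (and heavier) route than the paper. The paper also starts from Lemma~\ref{le}, writing $B'=EBF$ with $E,F$ integer matrices, but then it never looks at determinants or minors at all: substituting the hypothesis $B=aB'$ gives $B'=aEB'F$, and iterating this single identity yields $B'=a^sE^sB'F^s$ for every $s\ge 0$, so that $\tfrac{1}{a^s}B'$ is an integer matrix for all $s$; since $B'\neq 0$, this forces $a=\pm1$ immediately. That argument needs only that $E$ and $F$ have integer entries, whereas your route requires the additional (true, and worth verifying) fact that each factor $J_k+E_k$ and $J_k+F_k$ has determinant $-1$, plus the Cauchy--Binet/determinantal-divisor bookkeeping to handle the singular case, which is the generic one for exchange matrices. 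Your gcd-of-top-rank-minors invariance under unimodular $P,Q$ is standard and the conclusion $|a|^r=1$ is sound (note you should dispose of $a=0$ explicitly, since $a=0$ would force $B=0$ against the hypothesis), so the proof goes through; it is simply less economical than the paper's iteration trick, which sidesteps rank and minors entirely. The Pfaffian variant you sketch is not needed and would require more care than the minor argument, so if you keep your approach I would drop it.
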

\begin{proof}

Since $B^\prime$ is obtained from $B$ by a sequence of mutations, there exist integer matrices $E$ and $F$ such that $B^\prime=EBF$, by Lemma \ref{le}.
If $B=aB^\prime$, then we get $B^\prime=aE(B^\prime)F$. Also, we can have

$$B^\prime=aE(B^\prime)F=a^2E^2(B^\prime)F^2=\cdots=a^sE^s(B^\prime)F^s,$$
where $s\geq 0$. Thus  $\frac{1}{a^s}B^\prime=E^s(B^\prime)F^s$ holds for any $s\geq0$.

Assume by contradiction that $a\neq \pm1$, then when $s$ is large enough, $\frac{1}{a^s}B^\prime$ will not be an integer matrix. But $E^s(B^\prime)F^s$ is always an integer matrix. This is a contradiction. So we must have $a=\pm 1$ and thus  $B=\pm B^\prime$.
\end{proof}

A square matrix $A$ is {\bf decomposable} if there exists a permutation matrix $P$ such
that $PAP^{\rm T}$ is a block-diagonal matrix, and {\bf indecomposable} otherwise.

\begin{Lemma}\label{lemma1}
Let $0\neq B$ be an indecomposable skew-symmetrizable matrix. If $B^\prime$ is obtained from $B$ by a sequence of mutations and  $B=B^\prime A$ for some integer diagonal matrix $A=diag(a_1,\cdots,a_n)$, then $A=\pm I_n$ and $B=\pm B^\prime$.
\end{Lemma}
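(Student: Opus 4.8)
The plan is to reduce this to Lemma \ref{le1} by exploiting the interplay between the diagonal matrix $A$, the skew-symmetrizability of $B$ and of $B'$, and the indecomposability hypothesis. First I would record that since $B'$ is obtained from $B$ by mutations, $B'$ is also skew-symmetrizable, say with skew-symmetrizer $D' = \mathrm{diag}(d'_1,\dots,d'_n)$ having positive integer diagonal entries, so that $B'D'$ is skew-symmetric; likewise $BD$ is skew-symmetric for some positive diagonal $D$. From $B = B'A$ we get $BD = B'AD$, and since $BD$ is skew-symmetric while $B'D'$ is skew-symmetric, comparing the two expressions $B' = B(D)^{-1}$-type relations should force $AD$ to be proportional to $D'$ entrywise wherever $B'$ has a nonzero row/column. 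Concretely, skew-symmetry of $B'D'$ and of $BD = B'(AD)$ gives, for every $i,j$, the relation $b'_{ij} a_j d_j = -b'_{ji} a_i d_i$ and $b'_{ij} d'_j = -b'_{ji} d'_i$; dividing (on the support of $B'$) yields $a_j d_j / d'_j = a_i d_i / d'_i$ whenever $b'_{ij}\neq 0$.

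The next step is to propagate this equality across all indices using indecomposability. Since $B$ (hence $B'$, as mutation preserves indecomposability — this should be noted or is presumably available) is indecomposable, the graph on vertices $\{1,\dots,n\}$ with an edge between $i$ and $j$ whenever $b'_{ij}\neq 0$ is connected. Therefore the quantity $a_i d_i / d'_i$ is the same rational constant $c$ for all $i$, i.e. $a_i d_i = c\, d'_i$ for every $i$. This gives $AD = cD'$ as diagonal matrices, and hence $B'AD = cB'D'$, that is $BD = cB'D'$. Taking transposes and using skew-symmetry, $D B^{\mathrm{T}} = -BD = -cB'D' = c D' (B')^{\mathrm{T}}$, but more directly: from $BD = cB'D'$ and $B = B'A$ we obtain $B'AD = cB'D'$, which we already have; what we actually want is a scalar relation between $B$ and $B'$ themselves. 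From $AD = cD'$ we get $A = cD'D^{-1}$, so $A$ has entries $a_i = c d'_i/d_i$. Plugging into $B = B'A$ row-by-column: $b_{ij} = b'_{ij} a_j = b'_{ij} c d'_j / d_j$.

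To finish, I would play the symmetric game with $B'$ obtained from $B$ by mutations and also $B$ obtainable from $B'$ by the reverse mutations, so the hypotheses are symmetric in $B$ and $B'$ up to replacing $A$ by a diagonal matrix $A'$; combined with Lemma \ref{le}, write $B' = EBF = EB'AF$, iterate to get $B' = (EA\text{-twisted})^s \cdots$ as in Lemma \ref{le1}. Alternatively and more cleanly: $AD = cD'$ forces $c>0$ (all of $a_i$, $d_i$, $d'_i$ could be negative for $a_i$, but $d_i,d'_i>0$, so $\mathrm{sgn}(a_i)=\mathrm{sgn}(c)$ is constant, and $a_i\neq 0$ since $A$ diagonalizes an equation preserving the nonzero matrix $B$). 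Then $B = B'A$ with $A$ a diagonal matrix whose entries are rational but $B,B'$ integer matrices: on the support of $B'$ (which is nonempty and, by indecomposability, meets every row and column), $a_j = b_{ij}/b'_{ij}\in\mathbb Q$, and running Lemma \ref{le1}'s iteration argument $B' = E^sB'(AF)^s$-style shows the $a_j$ cannot have denominators, hence are integers, and symmetrically $1/a_j$ are integers, so $a_j=\pm1$; combined with constancy of sign, $A = \pm I_n$ and $B = \pm B'$. The main obstacle I expect is handling the bookkeeping carefully when $B'$ has zero rows or columns is ruled out precisely by indecomposability together with $B\neq 0$ (an indecomposable nonzero matrix of size $\geq 1$ has no zero row, else it would decompose), so the support graph is genuinely connected and spans all indices; getting the integrality-forcing iteration to talk about the diagonal matrix $A$ rather than a scalar — by absorbing $A$ into $F$ as $AF$ and noting Lemma \ref{le}'s $E,F$ are integer matrices — is the one technical point that needs care.
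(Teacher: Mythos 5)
Your first half is the right idea and is essentially the paper's argument: use skew-symmetry of the symmetrized matrices together with connectedness of the support graph (from indecomposability) to constrain the diagonal matrix $A$. But your endgame has a genuine gap. By introducing a second, a priori unrelated skew-symmetrizer $D'$ of $B'$, you only reach $a_i=c\,d'_i/d_i$, i.e.\ $A=cD'D^{-1}$, which does not show $A$ is a \emph{scalar} matrix; and the iteration you propose to finish does not work. Writing $B'=EB'(AF)$ and iterating gives $B'=E^sB'(AF)^s$ with every factor an integer matrix, so there is no contradiction to extract; and in the reverse direction $B'=(E')^sB'(F'A^{-1})^s$ the diagonal matrix $A^{-1}$ is interleaved with $F'$ and cannot be pulled out as a scalar $1/a^s$ the way Lemma \ref{le1} requires, so "symmetrically $1/a_j$ are integers" is not justified — cancellation inside the product could keep everything integral even if some $|a_j|>1$. (Also note $a_j\in\mathbb Z$ is already a hypothesis, so half of that step is vacuous.)

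The fix is one observation you skipped: mutation preserves the skew-symmetrizer, so the \emph{same} $D$ that skew-symmetrizes $B$ also skew-symmetrizes $B'$. Taking $D'=D$ in your own computation, the relation $a_jd_j/d'_j=a_id_i/d'_i$ on the support of $B'$ becomes $a_i=a_j$, and connectedness gives $A=aI_n$ for a single integer $a$, hence $B=aB'$; now Lemma \ref{le1} applies verbatim and yields $a=\pm1$. (Equivalently, you could invoke uniqueness of the skew-symmetrizer of an indecomposable matrix up to a positive scalar to get $D'=\lambda D$, with the same effect.) This is exactly the route the paper takes, and it makes your entire final paragraph unnecessary.
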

\begin{proof}
If there exists $i_0$ such that $a_{i_0}=0$, then the $i_0$-th column vector of $B$ is zero, by $B=B^\prime A$. This contradicts that $B$ is indecomposable and $B\neq 0$. So each $a_{i_0}$ is nonzero for $i_0=1,\cdots,n$.

Let $D=diag(d_1,\cdots,d_n)$ be a skew-symmetrizer of $B$. By $B=B^\prime A$ and $AD=DA$, we know that $$BD=B^\prime AD=(B^\prime D)A.$$

By the definition of mutation, we know that $D$ is also a skew-symmetrizer of $\mu_k(B)$, $k=1,\cdots,n$. Since $B^\prime$ is obtained from $B$ by a sequence of mutations, we get that $D$ is a skew-symmetrizer of $B^\prime$. Namely, we have that both  $B^\prime D$ and $BD=(B^\prime D)A$ are skew-symmetric.
Since $0\neq B$ is indecomposable, we must have $a_1=\cdots=a_n$. So $A=aI_n$ for some $a\in\mathbb Z$, and $B=aB^\prime$. Then by Lemma \ref{le1}, we can get $A=\pm I_n$ and $B=\pm B^\prime$.
\end{proof}

\begin{Lemma}\label{lemma2}
Let $B=diag(B_1,\cdots,B_s)$, where each $B_i$ is a nonzero indecomposable skew-symmetrizable matrix of size $n_i\times n_i$. If $B^\prime$ is obtained from $B$ by a sequence of mutations and  $B=B^\prime A$ for some integer diagonal  matrix $A=diag(a_{1},\cdots,a_{n})$, then $a_j=\pm 1$ for $j=1,\cdots,n$.
\end{Lemma}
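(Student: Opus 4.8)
The plan is to reduce Lemma \ref{lemma2} to Lemma \ref{lemma1}, which already handles the indecomposable case, by exploiting the block structure of $B=\mathrm{diag}(B_1,\dots,B_s)$. First I would analyze how mutations interact with this block decomposition. A mutation $\mu_k$ at an index $k$ lying in the block $B_i$ only changes entries within the $i$-th diagonal block, because $b_{jl}=0$ whenever $j$ and $l$ lie in different blocks, so the correction term $\mathrm{sgn}(b_{jk})[b_{jk}b_{kl}]_+$ vanishes unless both $j$ and $l$ are in the same block as $k$. Hence after any sequence of mutations the resulting matrix $B^\prime$ is again block-diagonal with the same block sizes, say $B^\prime=\mathrm{diag}(B_1^\prime,\dots,B_s^\prime)$, and each $B_i^\prime$ is obtained from $B_i$ by the corresponding sub-sequence of mutations performed inside block $i$.

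Next I would partition the diagonal matrix $A$ conformally, writing $A=\mathrm{diag}(A_1,\dots,A_s)$ where $A_i$ is the $n_i\times n_i$ diagonal block of $A$. The equation $B=B^\prime A$ then decouples into $B_i=B_i^\prime A_i$ for each $i=1,\dots,s$. Since each $B_i$ is a nonzero indecomposable skew-symmetrizable matrix and $B_i^\prime$ is obtained from $B_i$ by a sequence of mutations, Lemma \ref{lemma1} applies directly to each index $i$ and gives $A_i=\pm I_{n_i}$. Consequently every diagonal entry $a_j$ of $A$ equals $\pm 1$, which is exactly the claim.

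The only real point requiring care is the first step: verifying rigorously that mutation respects the block decomposition and that a global mutation sequence restricts, block by block, to a legitimate mutation sequence inside each $B_i$. This is straightforward from the explicit mutation formula for $B^\prime$, and it can also be seen from Lemma \ref{le}, since the matrices $J_k+E_k$ and $J_k+F_k$ differ from the identity only in row/column $k$, and for $k$ in block $i$ their nontrivial action is confined to the coordinates of block $i$. I do not anticipate a genuine obstacle here; the lemma is essentially a bookkeeping reduction, and unlike Lemma \ref{lemma1} there is no need to invoke indecomposability globally, only within each block — which is why the conclusion is merely $a_j=\pm1$ rather than $A=\pm I_n$ (different blocks may get different signs).
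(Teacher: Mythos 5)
Your proposal is correct and follows essentially the same route as the paper's proof: observe that mutation preserves the block-diagonal structure so $B^\prime=\mathrm{diag}(B_1^\prime,\dots,B_s^\prime)$, decouple $B=B^\prime A$ into $B_i=B_i^\prime A_i$, and apply Lemma \ref{lemma1} blockwise to get $A_i=\pm I_{n_i}$. Your extra care in justifying that mutations respect the block decomposition is a welcome addition, as the paper states this without proof.
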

\begin{proof}
By the definition of mutation, we know that $B^\prime$ has the form of $B^\prime=diag(B_1^\prime,\cdots,B_s^\prime)$, where each $B_i^\prime$ is obtained from $B_i$ by a sequence of mutations. We can write $A$ as a block-diagonal matrix $A=diag(A_1,\cdots,A_s)$, where $A_i$ is a $n_i\times n_i$ integer diagonal matrix. By $B=B^\prime A$, we know that $B_i=B_i^\prime A_i$. Then by Lemma \ref{lemma1}, we have $A_i=\pm I_{n_i}$ and $B_i=\pm B_i^\prime$ for $i=1,\cdots,s$.
In particular, we get $a_j=\pm 1$ for $j=1,\cdots,n$.
\end{proof}

\begin{Lemma}\label{le2}
Let $\mathcal A=\mathcal A(\mathbf{x}, B)$ be a cluster algebra, and $f : \mathcal{A} \rightarrow  \mathcal{A} $ be a $\mathbb Z$-homomorphism of $\mathcal{A}$. If there exists another seed $(\mathbf{z},B')$ of $\mathcal{A}$ such that such that $f({\bf x})={\bf z}$, then  $f(\mu_x({\bf x}))=\mu_{f(x)}({\bf z})$ for any $x\in{\bf x}$.
\end{Lemma}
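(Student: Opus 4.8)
The plan is to fix $x=x_k\in\mathbf{x}$ and reduce the asserted equality $f(\mu_{x_k}(\mathbf{x}))=\mu_{f(x_k)}(\mathbf{z})$ of clusters to a single identity between two cluster variables, which will then be forced by a divisibility argument feeding into Lemmas \ref{le1}--\ref{lemma2}. Since $f(\mathbf{x})=\mathbf{z}$ and $\mathbf{z}$ is an $n$-element set, $f$ restricts to a bijection $\mathbf{x}\to\mathbf{z}$; relabelling the seed $(\mathbf{z},B')$ by the corresponding permutation (this changes only the labelling, not the unlabelled seed nor the assertion to be proved), I may assume $f(x_i)=z_i$ for all $i$. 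Write $P_k(\mathbf{y}):=\prod_i y_i^{[b_{ik}]_+}+\prod_i y_i^{[-b_{ik}]_+}$ and $P_k'(\mathbf{y}):=\prod_i y_i^{[b'_{ik}]_+}+\prod_i y_i^{[-b'_{ik}]_+}$ for the two exchange binomials at $k$ (note $b_{kk}=b'_{kk}=0$, so neither involves $y_k$). Applying the ring homomorphism $f$ to $x_kx_k'=P_k(\mathbf{x})$ gives $z_k\,f(x_k')=P_k(\mathbf{z})$, hence $f(x_k')=P_k(\mathbf{z})/z_k$; likewise the cluster variable obtained by mutating $(\mathbf{z},B')$ at $k$ is $z_k'=P_k'(\mathbf{z})/z_k$. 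Since $f(\mu_{x_k}(\mathbf{x}))=\{z_i:i\neq k\}\cup\{f(x_k')\}$ and $\mu_{f(x_k)}(\mathbf{z})=\{z_i:i\neq k\}\cup\{z_k'\}$, everything comes down to $f(x_k')=z_k'$, i.e.\ to $P_k(\mathbf{z})=P_k'(\mathbf{z})$, i.e.\ (since $\mathbf{z}$ is a transcendence basis of $\mathcal{F}$) to the statement that the $k$-th columns of $B$ and $B'$ agree up to a global sign.

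The core step is to squeeze a divisibility out of this. The element $w:=f(x_k')$ lies in $\mathcal{A}$, so by the Laurent phenomenon applied to the seed $\mu_k(\mathbf{z},B')$ --- whose cluster is $\mathbf{z}^{(k)}:=(z_1,\dots,z_{k-1},z_k',z_{k+1},\dots,z_n)$ --- $w$ is a Laurent polynomial in $\mathbf{z}^{(k)}$. On the other hand, substituting $z_k=P_k'(\mathbf{z})/z_k'$ into $w=P_k(\mathbf{z})/z_k$ gives $w=z_k'\cdot R$, where $R:=P_k(\mathbf{z})/P_k'(\mathbf{z})$ involves only the variables $z_i$ with $i\neq k$. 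Thus $R=w/z_k'$ is simultaneously a Laurent polynomial in $\mathbf{z}^{(k)}$ and an element of $\mathbb{Q}(z_i:i\neq k)$; since $z_k'$ is transcendental over $\mathbb{Q}(z_i:i\neq k)$, comparing degrees in $z_k'$ forces $R\in\mathbb{Z}[z_i^{\pm1}:i\neq k]$. Hence $P_k'(\mathbf{z})$ divides $P_k(\mathbf{z})$ in $\mathbb{Z}[z_i^{\pm1}:i\neq k]$.

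Next I would unwind this binomial divisibility. Writing $\beta:=b_{\cdot k}$ and $\gamma:=b'_{\cdot k}$ (both with vanishing $k$-th entry) and pulling out the obvious monomial factors, the divisibility becomes $(\mathbf{z}^{\gamma}+1)\mid(\mathbf{z}^{\beta}+1)$; an elementary computation --- comparing the zero loci of these squarefree Laurent binomials, equivalently specializing at roots of unity --- shows this holds precisely when $\beta=c_k\gamma$ for some odd integer $c_k$, with $\beta=0$ whenever $\gamma=0$. Setting $c_k:=1$ in the latter case, we obtain $B=B'A$ for the integer diagonal matrix $A=\mathrm{diag}(c_1,\dots,c_n)$. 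If $B=0$ the lemma is trivial; otherwise pass to the block decomposition of $B$ into indecomposable blocks --- the zero blocks are $1\times1$, where $P_k=P_k'=2$ and $f(x_k')=z_k'$ already --- and apply Lemma \ref{lemma2} (which rests on Lemmas \ref{lemma1} and \ref{le1}) to each nonzero block, using that $B'$ is obtained from $B$ by a sequence of mutations because $(\mathbf{z},B')$ is a seed of $\mathcal{A}$. This yields $c_j=\pm1$ for all $j$, hence $\beta=\pm\gamma$ in every direction, hence $P_k(\mathbf{z})=P_k'(\mathbf{z})$ and $f(x_k')=z_k'$, which is what we wanted.

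The step I expect to be the genuine obstacle is exactly the passage from ``$f$ is merely a $\mathbb{Z}$-algebra homomorphism carrying one cluster to another'' to the divisibility $P_k'(\mathbf{z})\mid P_k(\mathbf{z})$. Expanding $f(x_k')$ in the cluster $\mathbf{z}$ only reproduces the binomial $P_k(\mathbf{z})/z_k$ and tells us nothing new; the idea is instead to expand it in the \emph{mutated} cluster $\mathbf{z}^{(k)}$, where the Laurent phenomenon for $\mathcal{A}$ becomes a real arithmetic constraint tying $B'$ to $B$. After that, the binomial divisibility analysis and the bookkeeping for decomposable $B$ are routine, and are precisely what the preliminary Lemmas \ref{le1}--\ref{lemma2} were prepared to absorb.
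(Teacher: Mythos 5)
Your proposal is correct and follows essentially the same route as the paper: expand $f(x_k')$ in the mutated cluster $\mu_k(\mathbf{z})$, invoke the Laurent phenomenon to force $P_k'(\mathbf{z})\mid P_k(\mathbf{z})$, deduce the column proportionality $B=B'A$, and then apply Lemma \ref{lemma2} to get $A=\pm I$ and hence $f(x_k')=z_k'$. Your explicit unwinding of the binomial divisibility $(\mathbf{z}^{\gamma}+1)\mid(\mathbf{z}^{\beta}+1)$ is in fact slightly more careful than the paper's terse passage from ``the ratio is a polynomial'' to ``the columns are proportional,'' but the argument is the same.
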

\begin{proof}
After permutating the rows and columns of $B$, it can be written as a block-diagonal matrix as follows.
 $$B=diag(B_1,B_2,\cdots,B_s),$$
  where $B_1$ is an $n_1\times n_1$ zero matrix and $B_j$ is nonzero indecomposable skew-symmetrizable matrix of size $n_j\times n_j$ for $j=2,\cdots,s$.

Without loss of generality, we  assume that
$f(x_i) = z_i $ for $1\leq i \leq n$.

Let $x_k^\prime$ and $z_k^\prime$ be the new obtained variables in $\mu_k(\mathbf{x}, B)$ and $\mu_k(\mathbf{z}, B')$. So we have

$$x_kx_k^\prime=\prod\limits_{i=1}^nx_i^{[b_{ik}]_+}+\prod\limits_{i=1}^nx_i^{[-b_{ik}]_+},\;\;\;\text{and}\;\;\;
z_kz_k^\prime=\prod\limits_{i=1}^nz_i^{[b_{ik}^\prime]_+}+\prod\limits_{i=1}^nz_i^{[-b_{ik}^\prime]_+}.$$
Thus
\begin{eqnarray}
f(x_k^\prime) &=& f\left(\frac{\prod\limits_{i=1}^nx_i^{[b_{ik}]_+}+\prod\limits_{i=1}^nx_i^{[-b_{ik}]_+}}{x_k}\right)
\nonumber\\
&=& \frac{\prod\limits_{i=1}^nz_i^{[b_{ik}]_+}+\prod\limits_{i=1}^nz_i^{[-b_{ik}]_+}}{z_k}\nonumber \\
      &=& \frac{\prod\limits_{i=1}^nz_i^{[b_{ik}]_+}+\prod\limits_{i=1}^nz_i^{[-b_{ik}]_+}}
      {\prod\limits_{i=1}^nz_i^{[b_{ik}^\prime]_+}+\prod\limits_{i=1}^nz_i^{[-b_{ik}^\prime]_+}} z'_k.\nonumber
      \end{eqnarray}
      Note that the above expression is the expansion of $f(x_k^\prime)$ with respect to the cluster $\mu_k({\bf z})$.
By $$f(x_k^\prime) \in f(\mathcal A)=\mathcal{A} \subset \mathbb Z[z_1^{\pm 1}, \dots , (z'_k)^{\pm 1}, \dots , z_n^{\pm 1}],$$ we can get
 $$\frac{\prod\limits_{i=1}^nz_i^{[b_{ik}]_+}+\prod\limits_{i=1}^nz_i^{[-b_{ik}]_+}}
      {\prod\limits_{i=1}^nz_i^{[b_{ik}^\prime]_+}+\prod\limits_{i=1}^nz_i^{[-b_{ik}^\prime]_+}}\in\mathbb Z[z_1^{\pm1},\cdots,z_{k-1}^{\pm1},z_{k+1}^{\pm1},\cdots,z_n^{\pm1}].$$
     Since both $\prod\limits_{i=1}^nz_i^{[b_{ik}]_+}+\prod\limits_{i=1}^nz_i^{[-b_{ik}]_+}$ and $\prod\limits_{i=1}^nz_i^{[b_{ik}^\prime]_+}+\prod\limits_{i=1}^nz_i^{[-b_{ik}^\prime]_+}$ is not divided by any $z_i$, we actually have
 $$\frac{\prod\limits_{i=1}^nz_i^{[b_{ik}]_+}+\prod\limits_{i=1}^nz_i^{[-b_{ik}]_+}}
      {\prod\limits_{i=1}^nz_i^{[b_{ik}^\prime]_+}+\prod\limits_{i=1}^nz_i^{[-b_{ik}^\prime]_+}}\in\mathbb Z[z_1,\cdots,z_{k-1},z_{k+1},\cdots,z_n].$$
   So for each $k$, there exists an integer $a_k\in \mathbb{Z}$ such that $(b_{1k}, b_{2k}, \dots , b_{nk})^{\rm T} = a_k(b'_{1k}, b'_{2k}, \dots , b'_{nk})^{\rm T}$.
 Namely, we have $B=B^\prime A$, where $A=diag(a_1,\cdots,a_n)$.
  Note that $B$ has the form of $$B=diag(B_1,B_2,\cdots,B_s),$$ where $B_1$ is a $n_1\times n_1$ zero matrix and $B_j$ is nonzero indecomposable skew-symmetrizable matrix of size $n_j\times n_j$ for $j=2,\cdots,s$.
 Applying Lemma \ref{lemma2} to the skew-symmetrizable matrix $diag(B_2,\cdots,B_s)$, we can get $a_{j}=\pm 1$ for $n_1+1,\cdots,n$.
 Since the first $n_1$ column vectors of both $B$ and $B^\prime$ are zero vectors, we can just take $a_1=\cdots=a_{n_1}=1$. So for each $k$, we have $a_k=\pm1$ and
 $$(b_{1k}, b_{2k}, \dots , b_{nk})^{\rm T} = a_k(b'_{1k}, b'_{2k}, \dots , b'_{nk})^{\rm T}=\pm (b'_{1k}, b'_{2k}, \dots , b'_{nk})^{\rm T}.$$

Hence, $$\frac{\prod\limits_{i=1}^nz_i^{[b_{ik}]_+}+\prod\limits_{i=1}^nz_i^{[-b_{ik}]_+}}
      {\prod\limits_{i=1}^nz_i^{[b_{ik}^\prime]_+}+\prod\limits_{i=1}^nz_i^{[-b_{ik}^\prime]_+}}=1.$$
      Thus we get
\begin{eqnarray}
f(x_k^\prime)=\frac{\prod\limits_{i=1}^nz_i^{[b_{ik}]_+}+\prod\limits_{i=1}^nz_i^{[-b_{ik}]_+}}
      {\prod\limits_{i=1}^nz_i^{[b_{ik}^\prime]_+}+\prod\limits_{i=1}^nz_i^{[-b_{ik}^\prime]_+}} z_k^\prime=z_k^\prime.\nonumber
      \end{eqnarray}
      So  $f(\mu_x({\bf x}))=\mu_{f(x)}({\bf z})$ for any $x\in{\bf x}$.
\end{proof}

\begin{Lemma}\label{le3}
Let $\mathcal A=\mathcal{A}({\bf x}, B)\subseteq\mathcal F$ be a cluster algebra, and $f$ be an automorphism of the ambient field $\mathcal F$. If there exists another seed $(\mathbf{z},B')$ of $\mathcal{A}$ such that $f({\bf x})={\bf z}$ and $f(\mu_x({\bf x}))=\mu_{f(x)}({\bf z})$ for any $x\in{\bf x}$. Then

(i) $f$ is an automorphism of $\mathcal{A}$;

(ii) $f$ is a  cluster automorphism of $\mathcal{A}$.
\end{Lemma}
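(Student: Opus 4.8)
The plan is to prove the two assertions in order, using Lemma \ref{le2} as the key input. The hypotheses give us a field automorphism $f$ of $\mathcal F$ together with seeds $(\mathbf x, B)$ and $(\mathbf z, B')$ of $\mathcal A$ with $f(\mathbf x) = \mathbf z$ and $f(\mu_x(\mathbf x)) = \mu_{f(x)}(\mathbf z)$ for every $x \in \mathbf x$; we want to conclude first that $f$ restricts to an algebra automorphism of $\mathcal A$, and then that this restriction is a cluster automorphism in the sense of Definition \ref{cluauto}.

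For part (i), the strategy is to show that $f(\mathcal A) \subseteq \mathcal A$, that $f^{-1}(\mathcal A) \subseteq \mathcal A$, and hence that $f$ restricts to a bijective $\mathbb Z$-algebra homomorphism $\mathcal A \to \mathcal A$. The containment $f(\mathcal A) \subseteq \mathcal A$ would follow by showing that $f$ sends every cluster variable of $\mathcal A$ into $\mathcal A$. First I would argue that $f$ maps the seed $\Sigma_0 = (\mathbf x, B)$ and all of its one-step neighbours into seeds of $\mathcal A$: indeed $f(\mathbf x) = \mathbf z$ is a cluster of $\mathcal A$, and the compatibility condition $f(\mu_k(\mathbf x)) = \mu_{f(x_k)}(\mathbf z)$ shows the mutated clusters land in $\mathcal A$ as well; moreover, by the matrix computation already carried out inside the proof of Lemma \ref{le2} (the step showing $B = B'A$ forces $A = \pm I$ on each nonzero indecomposable block and hence $f(x_k') = z_k'$), the exchange matrix $B'$ of the image seed is compatible with that of $\Sigma_0$ up to the permutation identifying $\mathbf x$ with $\mathbf z$. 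Then I would propagate this by induction along the tree $\mathbb T_n$: if $f$ carries the seed $\Sigma_t$ to a seed of $\mathcal A$ with a ``matching'' exchange matrix, then applying Lemma \ref{le2} (or its proof) at $\Sigma_t$ shows $f$ carries each neighbouring seed $\Sigma_{t'}$ to a seed of $\mathcal A$ as well, with the matching property preserved. Since every cluster variable of $\mathcal A$ appears in some seed $\Sigma_t$, this gives $f(\mathcal A) \subseteq \mathcal A$. The same argument applied to $f^{-1}$ — which is again a field automorphism of $\mathcal F$ and satisfies $f^{-1}(\mathbf z) = \mathbf x$, $f^{-1}(\mu_z(\mathbf z)) = \mu_{f^{-1}(z)}(\mathbf x)$ — yields $f^{-1}(\mathcal A) \subseteq \mathcal A$, so $f|_{\mathcal A}$ is an automorphism of $\mathcal A$.

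For part (ii), once (i) is established, $g := f|_{\mathcal A}$ is a $\mathbb Z$-algebra automorphism of $\mathcal A$, and the hypotheses say precisely that there is a seed $(\mathbf z, B')$ of $\mathcal A$ with $g(\mathbf x) = \mathbf z$ and $g(\mu_x(\mathbf x)) = \mu_{g(x)}(\mathbf z)$ for all $x \in \mathbf x$. These are exactly conditions (1) and (2) of Definition \ref{cluauto}, so $g$ is a cluster automorphism of $\mathcal A$; this step is essentially just unwinding the definition.

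I expect the main obstacle to be part (i), specifically the inductive step along $\mathbb T_n$: one must be careful that the ``matching'' data (a permutation of indices together with a sign, or more precisely the compatibility $B_t = B'_{t} A_t$ with $A_t = \pm I$ on nonzero blocks) is exactly what is needed both to run the argument of Lemma \ref{le2} at each vertex and to be re-established at the neighbouring vertex after mutation. Concretely, one needs: (a) that $f$ restricted to the fraction field identifies $\mathbb Z[\mathbf x^{\pm 1}]$ with $\mathbb Z[\mathbf z^{\pm 1}]$, so Laurent expansions are transported cleanly; and (b) that the sign ambiguity $A_t = \pm I$ on each block propagates consistently — which it does because mutation $\mu_k$ intertwines with the sign/permutation change of base in the exact way recorded by Lemma \ref{le}. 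Everything else (generation of $\mathcal A$ by cluster variables, the Laurent phenomenon, the involutivity of mutation) is available from the cited results, so the write-up mostly amounts to organising the induction carefully and invoking Lemma \ref{le2} at the right places.
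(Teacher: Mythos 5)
Your proposal is correct and follows essentially the same route as the paper: the paper's proof of (i) also reduces to showing that $f$ permutes the set $\mathcal X$ of cluster variables (injectivity coming for free from $f$ being a field automorphism, surjectivity from $f$ commuting with mutations propagated along the exchange tree), and then uses that $\mathcal A$ is generated by $\mathcal X$; part (ii) is likewise just unwinding Definition \ref{cluauto}. The only difference is one of explicitness — the paper compresses your entire induction along $\mathbb T_n$ (with the sign-matching of exchange matrices preserved under mutation) into the single phrase ``since $f$ commutes with mutations,'' and obtains surjectivity directly from $f(\mathcal X)=\mathcal X$ rather than by running the argument again for $f^{-1}$.
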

\begin{proof}
(i) Since $f$ is an  automorphism of the ambient field $\mathcal F$, we know that $f$ is injective.

Since $f$ commutes with mutations, we know that  $f$ restricts to
a surjection on $\mathcal{X}$, where $\mathcal{X}$ is the set of cluster variables of $\mathcal A$. Because $\mathcal{A}$ is generated by  $\mathcal{X}$, we get that $f$ restricts to a epimorphism of $\mathcal{A}$.

Hence, $f$ is an automorphism of $\mathcal A$.

(ii) follows from (i) and the definition of cluster automorphisms.
\end{proof}

\begin{Theorem}\label{main2}
 Let $\mathcal A$ be a cluster algebra, and $f:\mathcal A\rightarrow \mathcal A$ be a $\mathbb{Z}$-algebra homomorphism. Then $f$ is a
cluster automorphism if and only if there exist two clusters $\mathbf{x}$ and $\mathbf{z}$ such that $f({\bf x})={\bf z}$.
\end{Theorem}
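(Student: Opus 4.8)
The plan is to prove Theorem \ref{main2} by assembling the lemmas already established, so the real work has been done in Lemmas \ref{le2} and \ref{le3}. The forward direction is immediate: if $f$ is a cluster automorphism, then by Definition \ref{cluauto} there is a seed $(\mathbf{z}, B')$ with $f(\mathbf{x}) = \mathbf{z}$, so in particular the two clusters $\mathbf{x}$ and $\mathbf{z}$ exist. The substance is the converse.

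For the converse, suppose $f : \mathcal{A} \to \mathcal{A}$ is a $\mathbb{Z}$-algebra homomorphism and $\mathbf{x}$, $\mathbf{z}$ are clusters with $f(\mathbf{x}) = \mathbf{z}$. First I would fix seeds $(\mathbf{x}, B)$ and $(\mathbf{z}, B')$ realizing these clusters inside $\mathcal{A}$; since $\mathbf{x}$ and $\mathbf{z}$ are both clusters of $\mathcal{A}$, the seed $(\mathbf{z}, B')$ is mutation equivalent to $(\mathbf{x}, B)$, which is exactly the hypothesis needed to invoke the structural lemmas. Applying Lemma \ref{le2} with this data gives $f(\mu_x(\mathbf{x})) = \mu_{f(x)}(\mathbf{z})$ for every $x \in \mathbf{x}$, i.e. $f$ commutes with the first round of mutations from the initial seed. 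Next, since $\mathbf{x}$ is an $n$-tuple of free generators of $\mathcal{F} = \mathbb{Q}(u_1, \dots, u_n)$ and $f$ sends it to the free generating set $\mathbf{z}$, the homomorphism $f$ extends uniquely to a field automorphism of $\mathcal{F}$ (it is the automorphism determined by $u_i \mapsto z_i$ after identifying $\mathbf{x}$ with $(u_1,\dots,u_n)$, with inverse determined by $z_i \mapsto u_i$). Thus $f$ is an automorphism of the ambient field $\mathcal{F}$ satisfying $f(\mathbf{x}) = \mathbf{z}$ and $f(\mu_x(\mathbf{x})) = \mu_{f(x)}(\mathbf{z})$, so Lemma \ref{le3}(ii) applies directly and yields that $f$ is a cluster automorphism of $\mathcal{A}$.

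One point that deserves a sentence of care, and which I expect to be the only genuine obstacle, is the passage from ``$f$ commutes with mutations in the first step'' to ``$f$ commutes with mutations everywhere'' — that is, the surjectivity claim used inside the proof of Lemma \ref{le3}(i). The argument there is that, having matched the seeds $(\mathbf{x}, B)$ and $(\mathbf{z}, B')$ and verified $f(x_k') = z_k'$, one propagates the identity along the $n$-regular tree $\mathrm{T}_n$: by induction on the distance from the root, each seed $(\mathbf{x}_t, B_t)$ is carried by $f$ to the corresponding seed obtained from $(\mathbf{z}, B')$ by the same mutation sequence, using at each step the same computation as in Lemma \ref{le2} (the exchange matrices at matched vertices again differ by a diagonal sign matrix, forcing the exchange-polynomial ratio to be $1$). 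Consequently $f$ maps the set $\mathcal{X}$ of all cluster variables of $\mathcal{A}$ onto itself; since $\mathcal{X}$ generates $\mathcal{A}$ as a $\mathbb{Z}$-algebra, $f$ is surjective, and being a field automorphism it is injective, hence a $\mathbb{Z}$-algebra automorphism of $\mathcal{A}$.

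I would therefore write the proof of Theorem \ref{main2} as: the ``only if'' direction is Definition \ref{cluauto}; for the ``if'' direction, extend $f$ to a field automorphism of $\mathcal{F}$, apply Lemma \ref{le2} to obtain $f(\mu_x(\mathbf{x})) = \mu_{f(x)}(\mathbf{z})$, and then apply Lemma \ref{le3}(ii) to conclude that $f$ is a cluster automorphism. The whole theorem is thus a short corollary of the preceding lemmas, with the technical heart located in Lemma \ref{le2}'s divisibility/indecomposability analysis (Lemmas \ref{le1}--\ref{lemma2}) rather than in the theorem's own proof.
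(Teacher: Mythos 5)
Your proposal is correct and follows exactly the same route as the paper: the ``only if'' direction is immediate from Definition \ref{cluauto}, and the ``if'' direction is the combination of Lemma \ref{le2} with Lemma \ref{le3}. In fact you are slightly more careful than the paper's two-line proof, since you explicitly justify the step of extending $f$ to an automorphism of the ambient field $\mathcal F$ (needed because Lemma \ref{le3} is stated for field automorphisms, while the theorem only assumes a $\mathbb{Z}$-algebra homomorphism), a point the paper leaves implicit.
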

\begin{proof}
``{\em Only if part}": It follows from the definition of cluster automorphism.

 ``{\em If part}": It follows from Lemma \ref{le2} and Lemma \ref{le3}.
\end{proof}

\vspace{5mm}
{\bf Acknowledgements:}\; This project is supported by the National Natural Science Foundation of China (No.11671350 and No.11571173) and the Zhejiang Provincial Natural Science Foundation of China (No.LY19A010023).

\vspace{10mm}

\end{CJK*}

\begin{thebibliography}{99}


\bibitem{ASS}
I. Assem, R. Schiffler, V. Shamchenko, Cluster automorphisms. Proc. Lond. Math. Soc.,
(3)104 (2012),1271-1302

\bibitem{BFZ}
A. Berenstein, S. Fomin, A. Zelevinsky, Cluster algebras III: upper bounds and double Bruhat cells. Duke Math. J. (1)126 (2005), 1-52


\bibitem{CL1}
P. Cao, F. Li, The enough $g$-pairs property and denominator vectors of cluster algebras.
arXiv:1803.05281v2

\bibitem{CL3}
P. Cao, F. Li, Unistructurality of cluster algebras, preprint, arXiv:1809.05116.

\bibitem{CS}
W. Chang, R. Schiffler, A note on cluster automorphism groups, axXiv: 1812.05034.

\bibitem{CZ0}
W. Chang, B. Zhu, Cluster automorphism groups of cluster algebras with coefficients, Sci. China Math. (10)59(2016), 1919-1936

\bibitem{CZ1}
W. Chang, B. Zhu, Cluster automorphism groups and automorphism groups of exchange graphs. arxiv: 1506.02029

\bibitem{CZ2}
W. Chang, B. Zhu, Cluster automorphism groups of cluster algebras of finite type. J. Algebra.
447(2016):490-515

\bibitem{FZ1}
S. Fomin, A. Zelevinsky, Cluster algebras I: Foundations. J. Amer. Math. Soc. (2)15(2002), 497-529

\bibitem{FZ2}
S. Fomin, A. Zelevinsky, Cluster algebras II: Finite type classification. Invent. Math. (1)154(2003), 63-121



\bibitem{FZ4}
S. Fomin, A. Zelevinsky, Cluster algebras IV: Coefficients. Compos. Math. (1)143(2007), 112-164


\bibitem{GHKK}
M. Gross, P. Hacking, S. Keel, M. Kontsevich, Canonical bases for cluster algebras,
2014,arxiv:1411.1394

\bibitem{HLY}
M. Huang, F. Li, and Y. Yang, On structure of sign-skew-symmetric cluster algebras of geometric type,
I: In view of sub-seeds and seed homomorphisms, Sci. China Math. (5)61(2018), 831-854

\bibitem{LL}
F. Li, S. Liu. Periodicities in cluster algebras and cluster automorphism groups.
arXiv:1903.00893

\bibitem{LS}
K. Lee, R. Schiffler. Positivity for cluster algebras.
Ann. of Math. (2) 182(2015), no. 1, 73-125

\bibitem{S}
I. Saleh. Exchange maps of cluster algebras. Int. Electron. J. Algebra 16 (2014), 1-15
\end{thebibliography}
\end{document}